\def\author@andify{%
  \nxandlist {\unskip ,\penalty-1 \space\ignorespaces}%
    {\unskip {} \@@and~}%
    {\unskip \penalty-2 \space \@@and~}%
}
\title{Measures on $\aut(M)$}
\author[D. M. HOFFMANN]{Daniel Max Hoffmann$^{\dagger}$}
\thanks{$^{\dagger}$SDG. The author is supported by the Narodowe Centrum Nauki grants no. 2021/43/B/ST1/00405.}
\address{$^{\dagger}$Instytut Matematyki\\
Uniwersytet Warszawski\\
Warszawa\\
Poland}
\email{daniel.max.hoffmann@gmail.com}
 \DeclareMathOperator{\aut}{Aut}
\DeclareMathOperator{\Rr}{\mathbb{R}}
\newtheorem{theorem}{Theorem}[section]
\newtheorem{proposition}[theorem]{Proposition}
\newtheorem{lemma}[theorem]{Lemma}
\newtheorem{cor}[theorem]{Corollary}
\newtheorem{fact}[theorem]{Fact}
\theoremstyle{definition}
\newtheorem{definition}[theorem]{Definition}
\newtheorem{remark}[theorem]{Remark}
\theoremstyle{remark}
\newtheorem*{theorem*}{Theorem}
\newtheorem*{cor*}{Corollary}
\theoremstyle{definition}
\theoremstyle{definition}
\theoremstyle{definition}
\theoremstyle{remark}
\providecommand*{\cupdot}{%
  \mathbin{%
    \mathpalette\@cupdot{}%
  }%
}
\newcommand*{\@cupdot}[2]{%
  \ooalign{%
    $\m@th#1\cup$\cr
    \sbox0{$#1\cup$}%
    \dimen@=\ht0 %
    \sbox0{$\m@th#1\cdot$}%
    \advance\dimen@ by -\ht0 %
    \dimen@=.5\dimen@
    \hidewidth\raise\dimen@\box0\hidewidth
  }%
}
\providecommand*{\bigcupdot}{%
  \mathop{%
    \vphantom{\bigcup}%
    \mathpalette\@bigcupdot{}%
  }%
}
\newcommand*{\@bigcupdot}[2]{%
  \ooalign{%
    $\m@th#1\bigcup$\cr
    \sbox0{$#1\bigcup$}%
    \dimen@=\ht0 %
    \advance\dimen@ by -\dp0 %
    \sbox0{\scalebox{2}{$\m@th#1\cdot$}}%
    \advance\dimen@ by -\ht0 %
    \dimen@=.5\dimen@
    \hidewidth\raise\dimen@\box0\hidewidth
  }%
}
\def\Ind#1#2{#1\setbox0=\hbox{$#1x$}\kern\wd0\hbox to 0pt{\hss$#1\mid$\hss}
\lower.9\ht0\hbox to 0pt{\hss$#1\smile$\hss}\kern\wd0}
\def\notind#1#2{#1\setbox0=\hbox{$#1x$}\kern\wd0
\hbox to 0pt{\mathchardef\nn=12854\hss$#1\nn$\kern1.4\wd0\hss}
\hbox to 0pt{\hss$#1\mid$\hss}\lower.9\ht0 \hbox to 0pt{\hss$#1\smile$\hss}\kern\wd0}
\def\Ind#1#2{#1\setbox0=\hbox{$#1x$}\kern\wd0\hbox to 0pt{\hss$#1\mid$\hss}
\lower.9\ht0\hbox to 0pt{\hss$#1\smile$\hss}\kern\wd0}
\def\notind#1#2{#1\setbox0=\hbox{$#1x$}\kern\wd0
\hbox to 0pt{\mathchardef\nn=12854\hss$#1\nn$\kern1.4\wd0\hss}
\hbox to 0pt{\hss$#1\mid$\hss}\lower.9\ht0 \hbox to 0pt{\hss$#1\smile$\hss}\kern\wd0}
\newcommand{\CM}{{\mathcal M}}
\newcommand{\conv}{\mathrm{conv}}
\newcommand{\Av}{\mathrm{Av}}
\begin{document}

\maketitle
\vspace{-8mm}
\begin{center}
{\small Instytut Matematyki, Uniwersytet Warszawski}
\end{center}

\section{Introduction}
If a topological group $G$ acts on a compact Hausdorff space $X$, mathematicians often seek to understand the action of the space of measures $\CM(G)$ on the space of measures $\CM(X)$. To explain this, consider measures $\mu \in \CM(G)$ and $\nu \in \CM(X)$. The action is defined as follows:
\begin{equation*}
    (\mu * \nu)(A) = \int\limits_{g\in G} g_{*}\nu(A) \, d\mu(g),
\end{equation*}
where $g_{*}\nu$ denotes the push-forward of $\nu$ under the mapping $a \mapsto g \cdot a$, for $a \in X$. Specifically, we have $g_{*}\nu(A) = \nu(g^{-1}(A)) = (g \cdot \nu)(A)$ for the induced action of $G$ on $\CM(X)$, where $(g \cdot \nu) = \nu \circ g^{-1}$. 
If the action of $G$ on $X$ is tame, it should follow that the action of $\mathcal{M}(G)$ on $\mathcal{M}(X)$ is also tame. We can investigate this problem in the model theoretic set-up and therefore we need to describe a measure-theoretic framework suitable for us.

Unfortunately, many sources focus only on locally compact groups or Polish groups and since $\aut(M)$ might be not such a group, we need a more general approach. For example, if $G$ is a topological group and $\CM(G)$ is the space of regular Borel probability measures on $G$, say $\mu,\nu\in\CM(G)$, we want to define the \emph{convolution} $\mu\ast\nu$ and then equip $\CM(G)$ with the structure of a semigroup by using it. The standard approach is as on page 15 in \cite{Kallenberg}, but as we already mentioned, it does not work as the multiplication map $m:G\times G\to G$ might be not measurable in $\mathcal{B}(G)\otimes\mathcal{B}(G)$ ($\neq \mathcal{B}(G\times G)$ - this is not the case for separable metric groups, see Lemma 1.2 in \cite{Kallenberg}). We decided to investigate both cases: when $G$ has nice topological properties (e.g. $\aut(M)$ for countable model $M$) and when we have no control over the topological properties of $G$. For the second case, we need to choose a proper space of measures to be able to convolute measures.

\section{$\tau$-additive and Keisler measures}
Model theory usually works with regular Borel probability measures, which on the space of types turn out to be the Keisler measures (cf. Chapter 7 in \cite{Guide_NIP}). By studying \cite{Fremlin}, we found other class of measures which corresponds to Keilser measures, but behaves well under the convolution operation. As \cite{Fremlin} is a very comprehensive source, we decided to recall some of the definitions and results from it here, so the reader will not need to search for them. 

Let $\mathcal{T}$ be a topology on $X$, and let $(X,\Sigma,\mu)$ be \emph{measure space} (i.e. $\mu$ takes values in $[0,\infty]$, $\mu(\emptyset)=0$ and $\mu$ is countably additive on disjoint countable families of sets). 
We say that $(X,\mathcal{T},\Sigma,\mu)$ is a \emph{topological measure space} if $\mathcal{T}\subseteq\mathcal{B}(X)\subseteq\Sigma$.

\begin{definition}[411H in \cite{Fremlin}]
    A quasi-Radon measure space is a topological space $(X,\mathcal{T},\Sigma,\mu)$ such that
    \begin{enumerate}
        \item $(X,\Sigma,\mu)$ is complete,
        \item $(X,\Sigma,\mu)$ is locally determined,
        \item $\mu$ is $\tau$-additive,
        \item $\mu$ is inner regular with respect to the closed sets,
        \item $\mu$ is effectively locally finite.
    \end{enumerate}
\end{definition}

The properties \emph{locally determined} and \emph{effectively locally finite} follow automatically if the measure $\mu$ is a probability measure, which will be our main focus here. The assumption about being a complete measure is quite harmless, as we can always pass to the completion of a given measure (cf. 212C in \cite{Fremlin}) which preserves outer regularity (with respect to open sets), inner regularity (with respect to closed/compact sets), $\tau$-additivity etc.
The key property here is the \emph{$\tau$-additivity}:

\begin{definition}[411C in \cite{Fremlin}]\label{def.tau.add}
    Let $(X,\Sigma,\mu)$ be a measure space and $\mathcal{T}$ a topology on $X$. We say that $\mu$ is $\tau$-additive if whenever $\mathcal{G}$ is a non-empty upwards directed family of open sets such that $\mathcal{G}\subseteq\Sigma$ and
    $\bigcup\mathcal{G}\in\Sigma$ then 
    $$\mu(\bigcup\mathcal{G})\sup\limits_{G\in\mathcal{G}}\mu(G).$$
\end{definition}

Because $\tau$-additivity is not a frequently used notion, we start with several facts characterizing it and then we show that in the case of a space of types we recover the Keisler measures, as expected.
For the first example, 
note that for any topological space $X$, the Dirac measures (considered on $\Sigma=\mathcal{P}(X)$) are quasi-Radon probabilitic measures. Similarly for finite convex combinations of Dirac measures.

\begin{fact}[411E in \cite{Fremlin}]\label{Fre.411E}
    Let $(X,\Sigma,\mu)$ be a measure space, where $X$ is a topological space.
    If $\mu$ is inner regular with respect to the compact sets then $\mu$ is $\tau$-additive.
\end{fact}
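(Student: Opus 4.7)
The plan is to reduce $\tau$-additivity to a statement about individual compact sets, using the inner regularity hypothesis, and then exploit the interplay between topological compactness and the upwards-directedness of $\mathcal{G}$.

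First I would fix a non-empty upwards directed family $\mathcal{G}$ of open sets with $\mathcal{G}\subseteq\Sigma$ and $U := \bigcup\mathcal{G}\in\Sigma$. Monotonicity gives $\mu(U)\geq \sup_{G\in\mathcal{G}}\mu(G)$ for free, so the whole content of the statement is the reverse inequality. By the inner regularity hypothesis,
\[
\mu(U)=\sup\{\mu(K):K\subseteq U,\ K\text{ compact},\ K\in\Sigma\},
\]
so it is enough to prove that $\mu(K)\leq \sup_{G\in\mathcal{G}}\mu(G)$ for every such compact $K\in\Sigma$ contained in $U$.

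Fix such a $K$. Since $K\subseteq U=\bigcup\mathcal{G}$, the family $\mathcal{G}$ is an open cover of $K$, and by compactness there exist finitely many $G_{1},\ldots,G_{n}\in\mathcal{G}$ with $K\subseteq G_{1}\cup\cdots\cup G_{n}$. Now I invoke upwards-directedness: by induction on $n$, there is a single $G\in\mathcal{G}$ with $G_{1}\cup\cdots\cup G_{n}\subseteq G$, and hence $K\subseteq G$. This yields $\mu(K)\leq\mu(G)\leq \sup_{G'\in\mathcal{G}}\mu(G')$. Taking the supremum over $K$ on the left-hand side and combining with the inner regularity formula for $\mu(U)$ finishes the argument.

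There is no real obstacle here: the proof is a two-line reduction, and the only thing to be careful about is that the witnessing compact sets are themselves measurable (which is part of the inner regularity statement) and that upwards-directedness of $\mathcal{G}$ must be used to turn the finite subcover into a single element of $\mathcal{G}$ — simply picking one of the $G_{i}$ would not suffice because the union need not itself lie in $\mathcal{G}$.
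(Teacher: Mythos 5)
Your proof is correct and is exactly the standard argument (it is essentially Fremlin's own proof of 411E): the paper imports this statement as a Fact without proof, so there is nothing to compare against beyond the source. You correctly isolate the only nontrivial inequality, use inner regularity to reduce to a measurable compact $K\subseteq\bigcup\mathcal{G}$, and — crucially — combine the finite subcover with upwards-directedness to absorb it into a single element of $\mathcal{G}$; the argument also handles the case $\mu(\bigcup\mathcal{G})=\infty$ since the supremum over $K$ is then likewise infinite.
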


\noindent
By the above fact, we see that if $\mu$ is a Borel probability measure on a compact space $X$ such that $\mu$ is inner regular with respect to the closed sets, then the completion of $\mu$ is a quasi-Radon measure. Not all topological spaces which we want to study are compact, thus we provide the next fact for the regular spaces.

\begin{fact}[415C in \cite{Fremlin}]\label{Fre.415C}
Let $X$ be a regular topological space.
\begin{enumerate}
    \item If $\mu$ is a complete, locally determined, effectively locally finite,
    $\tau$-additive topological measure on $X$,
    inner regular with respect to the Borel sets, then it is a quasi-Radon measure.

    \item If $\mu$ is an effectively locally finite, $\tau$-additive Borel measure on $X$, then its c.l.d. version is a quasi-Radon measure. In particular, if $\mu$ is a $\tau$-additive Borel probability measure on $X$ then its completion $\hat{\mu}$ (equal to its c.l.d. version) is a quasi-Radon measure.
\end{enumerate}
\end{fact}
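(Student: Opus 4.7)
The plan is to handle part (1) directly and then reduce part (2) to part (1) by the c.l.d.\ version construction. For (1), four of the five defining conditions of a quasi-Radon measure are already among the hypotheses; only inner regularity with respect to closed sets is missing. So the whole task is to upgrade, in the presence of $\tau$-additivity and the regularity of $X$, inner regularity with respect to Borel sets to inner regularity with respect to closed sets.

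First I would establish closed-set inner regularity for an arbitrary open $U \in \Sigma$. Given $\gamma < \mu(U)$, the regularity of $X$ gives for each $x \in U$ an open neighbourhood $V_x \ni x$ with $\overline{V_x} \subseteq U$, so $U = \bigcup_{x \in U} V_x$. The family
\[
\mathcal{G} := \{ V_{x_1} \cup \dots \cup V_{x_n} : n \geq 1,\; x_1, \dots, x_n \in U \}
\]
is non-empty, upwards directed, consists of open subsets of $U$, and $\bigcup \mathcal{G} = U \in \Sigma$. By Definition \ref{def.tau.add} we have $\sup_{G \in \mathcal{G}} \mu(G) = \mu(U)$, so some $V_{x_1} \cup \dots \cup V_{x_n}$ already has measure $> \gamma$, and then $F := \overline{V_{x_1}} \cup \dots \cup \overline{V_{x_n}} \subseteq U$ is closed with $\mu(F) > \gamma$.

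Next I would bootstrap from open sets to all Borel sets. Let $\mathcal{C}$ denote the class of measurable sets inner-approximable by closed sets. By the previous step, $\mathcal{C}$ contains every open set; it trivially contains every closed set. A standard $\varepsilon/2^n$ argument shows that $\mathcal{C}$ is closed under countable intersections and, using continuity from below together with effective local finiteness to reduce to finite-measure pieces, under countable unions. Therefore $\mathcal{C}$ contains the Borel $\sigma$-algebra, and composing with the hypothesized inner regularity with respect to Borel sets yields inner regularity with respect to closed sets on all of $\Sigma$, completing (1).

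For (2), I would invoke the standard c.l.d.\ version machinery (cf.\ 213E in \cite{Fremlin}): the c.l.d.\ version $\tilde\mu$ of $\mu$ is by construction complete and locally determined, and preserves $\tau$-additivity, effective local finiteness, and inner regularity with respect to Borel sets. Part (1) then applies to $\tilde\mu$ and delivers the quasi-Radon property. When $\mu$ is a Borel probability measure, the completion $\hat\mu$ already satisfies the defining property of the c.l.d.\ version, so the ``in particular'' clause follows. The main obstacle I anticipate is the bookkeeping in the $\sigma$-algebra argument in the non-probability case: one has to track exactly how effective local finiteness permits the reduction of countable union and intersection arguments to finite-measure slices, which in the probability setting is entirely automatic.
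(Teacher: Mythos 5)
This statement is quoted in the paper as a Fact (Fremlin 415C) with no proof given, so there is no internal argument to compare against; judged on its own, your reconstruction follows the standard (essentially Fremlin's) line: use regularity of $X$ plus $\tau$-additivity to inner-approximate open sets by closed sets, transfer this to all Borel sets, compose with the hypothesized inner regularity with respect to Borel sets, and reduce part (2) to part (1) via the c.l.d.\ version (which for a totally finite Borel measure coincides with the completion). Two small points. First, your inference ``$\mathcal{C}$ contains opens and closeds and is closed under countable unions and intersections, therefore $\mathcal{C}\supseteq$ Borel'' is not valid as literally stated, since $\mathcal{C}$ is not shown closed under complements; the standard repair is to pass to $\mathcal{D}=\{B \text{ Borel}: B\in\mathcal{C} \text{ and } B^{c}\in\mathcal{C}\}$, which \emph{is} a $\sigma$-algebra containing the open sets (because opens and closeds both lie in $\mathcal{C}$), whence $\mathcal{C}\supseteq\mathcal{D}\supseteq$ Borel. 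Second, the bookkeeping you flag in the non-totally-finite case is real: the $\varepsilon/2^{n}$ argument for countable intersections needs finite-measure control of the approximated pieces, and the complement trick interacts badly with sets of infinite measure, which is exactly where Fremlin's semi-finiteness (effective local finiteness implies semi-finite) and the 412C-type closure lemmas are invoked; for the probability measures actually used in this paper, your argument as given is complete once the complement trick is inserted.
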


\noindent
For a moment, let us work with $X$ being a space of types, so in particular a compact regular space.
Then the situation can be illustrated as follows (regular means inner regular with respect to the compact sets and outer regular with respect to the open sets):
\begin{equation*}
\left\{
\setlength\arraycolsep{0pt}
\begin{array}{ c   >{{}}c<{{}} c    @{{}={}} l}
\text{regular} \\
\text{Borel probability} \\
\text{measures on }X
\end{array}
\right\} \;=\; \left\{
\setlength\arraycolsep{0pt}
\begin{array}{ c   >{{}}c<{{}} c    @{{}={}} l}
\tau\text{-additive} \\
\text{Borel probability} \\
\text{measures on }X
\end{array}
\right\} \;\xrightarrow[]{\text{completion}}\; \left\{
\setlength\arraycolsep{0pt}
\begin{array}{ c   >{{}}c<{{}} c    @{{}={}} l}
\text{probability} \\
\text{quasi-Radon} \\
\text{measures on }X
\end{array}
\right\}
\end{equation*}
For left-to-right inclusion in the equality above, we argue as follows. Since $X$ is compact and Hausdorff, closed sets and compact sets coincide. Thus inner regularity with respect to the closed sets is indeed inner equality with repsect to the compact sets and we obtain $\tau$-additivity by Fact \ref{Fre.411E}. For the right-to-left inclusion, point (b) from Fact \ref{Fre.415C} says that a completion $\hat{\mu}$ of a $\tau$-additive Borel probability measure on $X$ is a quasi-Radon measure (i.e. the arrow to the last box above), thus $\mu$ is inner regular with respect to the closed sets. We only need to show the outer regularity with respect to the open sets, which follows by:

\begin{fact}[412Wb in \cite{Fremlin}]\label{Fre.412Wb}
    Let $(X,\Sigma,\mu)$ be a measure space, where $X$ is a topological space.
    Assume that there is a countable open cover of $X$ by sets of finite measure.
    The following are equivalent.
    \begin{enumerate}
        \item $\mu$ is inner regular with respect to the closed sets.
        \item $\mu$ is outer regular with respect to the open sets,
        \item for any measurable set $E\subseteq X$ and $\epsilon>0$, 
        there are a measurable closed set $F\subseteq E$ and a measurable open set $H\supseteq E$
        such that $\mu(H\setminus F)<\epsilon$.
    \end{enumerate}
\end{fact}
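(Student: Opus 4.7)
The plan is to prove (c)$\Rightarrow$(a), (c)$\Rightarrow$(b) trivially, and then handle the two less-obvious directions (a)$\Rightarrow$(c) and (b)$\Rightarrow$(c) in parallel, using the countable cover $X=\bigcup_{n\in\Nn}U_{n}$ with $\mu(U_{n})<\infty$ to localise and then glue via countable subadditivity with geometric error.

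For (a)$\Rightarrow$(c): Fix a measurable $E\subseteq X$ and $\epsilon>0$. From (a) applied directly to $E$ we pick a closed measurable $F\subseteq E$ with $\mu(E\setminus F)<\epsilon/2$. To produce the open envelope $H$, I apply (a) not to $E$ but to each complement $U_{n}\setminus E$, which is measurable and of finite measure: choose a closed measurable $C_{n}\subseteq U_{n}\setminus E$ with $\mu((U_{n}\setminus E)\setminus C_{n})<\epsilon/2^{n+2}$. The point is that $U_{n}\setminus C_{n}=U_{n}\cap(X\setminus C_{n})$ is open, and the union $H:=\bigcup_{n}(U_{n}\setminus C_{n})$ is therefore open and contains $E$ (since $\{U_{n}\}$ covers $X$ and $C_{n}\cap E=\emptyset$). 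Countable subadditivity gives $\mu(H\setminus E)\leq\sum_{n}\mu((U_{n}\setminus E)\setminus C_{n})<\epsilon/2$, and combined with the estimate on $F$ we obtain $\mu(H\setminus F)<\epsilon$.

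For (b)$\Rightarrow$(c) the argument is dual. The direct application of (b) to $E$ yields an open $H\supseteq E$ with $\mu(H\setminus E)<\epsilon/2$. For the inner closed approximation, apply (b) to each $U_{n}\setminus E$ (of finite measure) to find an open $V_{n}\supseteq U_{n}\setminus E$ with $\mu(V_{n}\setminus(U_{n}\setminus E))<\epsilon/2^{n+2}$. Setting $V:=\bigcup_{n}V_{n}$ yields an open set containing $X\setminus E$, and the closed measurable set $F:=X\setminus V$ is contained in $E$. A direct inclusion check shows that $V\cap E\subseteq\bigcup_{n}(V_{n}\setminus(U_{n}\setminus E))$, because if $x\in V_{n}\cap E$ then $x\notin U_{n}\setminus E$. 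Hence $\mu(E\setminus F)=\mu(V\cap E)<\epsilon/2$, which together with the envelope $H$ from (b) gives (c).

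The main potential obstacle is purely bookkeeping: one must be careful that the sets $U_{n}\setminus C_{n}$ used for gluing in the first direction really are open (which relies on $U_{n}$ being open and $C_{n}$ being \emph{closed}, not merely Borel), and dually that $F=X\setminus V$ is really contained in $E$, which needs $V\supseteq X\setminus E$. Once these two inclusion-chains are secured, the telescoping $\epsilon/2^{n+2}$ bound does all the work, and no stronger hypothesis than the countable $\sigma$-finite open cover is needed.
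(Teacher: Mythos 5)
This Fact is quoted from Fremlin without proof in the paper, so there is nothing to compare against; judged on its own, your argument has the right overall architecture (localise on the countable open cover, use the open/closed duality $U_n\setminus C_n$, glue with a geometric error), but each of the two nontrivial implications contains a genuine gap at the step where you apply the hypothesis \emph{directly to $E$}. The cover hypothesis only makes $\mu$ $\sigma$-finite; a measurable set $E$ may still have $\mu(E)=\infty$. In (a)$\Rightarrow$(c), inner regularity in the form $\mu(E)=\sup\{\mu(F):F\subseteq E \text{ closed, measurable}\}$ then only produces closed subsets of arbitrarily large \emph{finite} measure and gives no control whatsoever on $\mu(E\setminus F)$, so ``pick a closed measurable $F\subseteq E$ with $\mu(E\setminus F)<\epsilon/2$'' is unjustified. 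Dually, in (b)$\Rightarrow$(c), when $\mu(E)=\infty$ every measurable open $H\supseteq E$ has infinite measure, so outer regularity is vacuous there and ``the direct application of (b) to $E$ yields an open $H\supseteq E$ with $\mu(H\setminus E)<\epsilon/2$'' fails. Passing from the sup/inf form of regularity to the small-difference form for infinite-measure sets is precisely the nontrivial content that the countable cover is there to supply, so it cannot be waved through.

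The good news is that the repair is exactly the localisation you already perform for the other half, applied once more. For the closed set in (a)$\Rightarrow$(c): choose closed measurable $D_n\subseteq U_n\cap E$ with $\mu\big((U_n\cap E)\setminus D_n\big)<\epsilon 2^{-n-2}$ (legitimate, since $\mu(U_n\cap E)<\infty$), put $V:=\bigcup_n (U_n\setminus D_n)$, which is open, measurable and contains $X\setminus E$, and take $F:=X\setminus V$; then $F\subseteq E$ is closed and $\mu(E\setminus F)\leqslant\sum_n\mu\big((U_n\cap E)\setminus D_n\big)<\epsilon/2$. For the open set in (b)$\Rightarrow$(c): choose measurable open $W_n\supseteq U_n\cap E$ with $\mu\big(W_n\setminus(U_n\cap E)\big)<\epsilon 2^{-n-2}$ and take $H:=\bigcup_n W_n$, so that $H\supseteq E$ and $\mu(H\setminus E)<\epsilon/2$. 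With these two substitutions your inclusion checks and the $\epsilon 2^{-n-2}$ bookkeeping go through verbatim, and (c)$\Rightarrow$(a),(b) are indeed immediate (in the infinite-measure case note $\mu(F)=\infty$, resp. $\mu(H)\leqslant\mu(E)+\epsilon$, so both regularity conditions still follow).
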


From the above, we see that in the case of the space of types, we get back our old friends - i.e. Keisler measures.
However, let us finally argue that in the case of Polish spaces (and Polish groups, which come to our picture as well), regular Borel probability measures coincide with $\tau$-additive Borel probability measures, and completion of any of the latter ones is a probability quasi-Radon measure.

\begin{fact}[415D in \cite{Fremlin}]\label{Fre.415D}
Let $X$ be a hereditary Lindel\"{o}f topological space (e.g. separable metrizable). 
If $\mu$ is an effectively locally finite Borel measure on $X$, then its completion is a quasi-Radon measure.
In particular, if $\mu$ is a Borel probability measure on $X$ then its completion $\hat{\mu}$ is a quasi-Radon probability measure.
\end{fact}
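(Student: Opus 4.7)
The plan is to verify the five defining conditions of a quasi-Radon measure for $\hat{\mu}$. Completeness is automatic by passing to the completion; for an effectively locally finite measure (in particular any probability measure), local determinedness and effective local finiteness are preserved by completion and hold trivially in the probability case, as already noted in the excerpt. The substantive content is therefore $\tau$-additivity and inner regularity with respect to closed sets, and by standard completion arguments it suffices to check these properties for the Borel measure $\mu$ itself.

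For $\tau$-additivity, let $\mathcal{G}$ be a non-empty upward directed family of open sets with $U := \bigcup \mathcal{G}$ measurable. As an open subspace of the hereditarily Lindel\"of space $X$, the set $U$ inherits the Lindel\"of property, so the open cover $\mathcal{G}$ of $U$ admits a countable subcover $\{H_n\}_{n\in\mathbb{N}} \subseteq \mathcal{G}$. Using directedness I inductively select $G_n \in \mathcal{G}$ with $G_1 \subseteq G_2 \subseteq \cdots$ and $H_1 \cup \cdots \cup H_n \subseteq G_n$, so that $U = \bigcup_n G_n$. Continuity from below of $\mu$ then yields
\begin{equation*}
\mu(U) \;=\; \lim_{n\to\infty} \mu(G_n) \;\leq\; \sup_{G\in\mathcal{G}} \mu(G),
\end{equation*}
with the reverse inequality immediate from monotonicity.

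For inner regularity with respect to closed sets, I would introduce the class $\mathcal{A}$ of Borel sets $E$ such that for every $\epsilon > 0$ there exist a closed $F \subseteq E$ and an open $V \supseteq E$ with $\mu(V \setminus F) < \epsilon$. This class is closed under complements (interchange the roles of $F$ and $V$), under finite intersections, and, by a standard $\epsilon/2^n$ argument, under countable disjoint unions; so it is a $\lambda$-system. Since the open sets form a $\pi$-system generating $\mathcal{B}(X)$, it suffices by the $\pi$-$\lambda$ theorem to show every open set belongs to $\mathcal{A}$.

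This last step is the main obstacle. In the separable metrizable setting it is immediate because every open set is $F_\sigma$, so inner approximation by closed sets follows from continuity from below. In the general hereditary Lindel\"of setting one must combine the just-proved $\tau$-additivity with a Lindel\"of argument on a suitable upward directed family of open (or co-closed) subsets of a given open $U$ to extract the required inner closed approximation. Once this is in hand, $\mathcal{A} = \mathcal{B}(X)$, and all five quasi-Radon axioms for $\hat{\mu}$ are verified; the probability case then follows as the special instance stated in the second sentence of the fact.
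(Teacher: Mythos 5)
The paper itself offers no proof of this statement --- it is quoted verbatim from Fremlin --- so your argument has to stand on its own. Most of it does: completeness and local determinedness are handled correctly (for a probability measure they are trivial, and in general effective local finiteness plus hereditary Lindel\"ofness gives $\sigma$-finiteness, since the union of all finite-measure open sets is Lindel\"of and its complement is negligible), and your $\tau$-additivity argument (countable subcover of the open union, upward-directed refinement to an increasing sequence, continuity from below) is exactly the standard one and is correct.

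The problem is the step you yourself flag as ``the main obstacle'': inner regularity of open sets with respect to closed sets. This cannot be closed the way you suggest, because it is not a consequence of hereditary Lindel\"ofness plus $\tau$-additivity at all --- a separation axiom is indispensable. Concretely, let $X$ be an uncountable set with the cocountable topology; this space is hereditarily Lindel\"of, and setting $\mu(E)=0$ for $E$ countable and $\mu(E)=1$ for $E$ cocountable defines a $\tau$-additive Borel probability measure, yet a proper cocountable open set $G$ has $\mu(G)=1$ while every closed subset of $G$ is countable and hence null. So the fact as transcribed here (and hence your proof) fails without an extra hypothesis; Fremlin's 415D assumes $X$ is in addition \emph{regular}, which is what the parenthetical ``(e.g.\ separable metrizable)'' is really pointing at. With regularity the gap closes easily: a regular hereditarily Lindel\"of space is perfectly normal, since for open $G$ one covers it by open sets $V_x$ with $\overline{V_x}\subseteq G$ and extracts a countable subcover, exhibiting $G$ as an $F_\sigma$; inner approximation by closed sets then follows from continuity from below, which is precisely the ``immediate'' case you already dispose of for separable metrizable spaces, and your $\pi$-$\lambda$ argument finishes the proof. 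Since every space this paper actually applies the fact to (type spaces, Polish groups) is regular, nothing downstream is damaged, but you should add the regularity hypothesis and prove the $F_\sigma$ claim rather than leaving the key step as a declared obstacle.
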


Thus, if $X$ is a Polish space and $\mu$ is a (regular or not...) Borel probability measure on $X$ then its completion $\hat{\mu}$ is a quasi-Radon measure, hence $\mu$ is $\tau$-additive. 
Moreover, as every Polish space is a Radon space, all Borel probability measures are inner regular with respect to the compact sets, and so outer regular with respect to the open sets by Fact \ref{Fre.412Wb}.

After all of that, we are quite convinced to start to work with $\tau$-additive Borel probability measures and so, if $X$ is a topological space, let $\CM_\tau(X)$ denote the set of all such measures over $X$.

\section{Convolution action for $\tau$-additive measures}
Coming back to the case of a topological group $G$ acting on a topological space $X$, so to a dynamical system $(X,G)$,
we are interested in obtaining a discrete dynamical system $(\mathcal{M}_\tau(X),\mathcal{M}_\tau(G))$.
The following fact is one of the main properties of $\tau$-additive measures, which allows us to define the so-called convolution action.

\begin{fact}[417B in \cite{Fremlin}]\label{Fre.417B}
    Let $X_1,X_2$ be topological spaces and let $\nu$ be a $\tau$-additive topological measure on $X_2$.
    For simplicity, assume also that $\nu$ is a probability measure.
    \begin{enumerate}
        \item If $W\subseteq X_1\times X_2$ is Borel then the map
        $$X_1\ni x\mapsto \nu\big(W[\{x\}] \big)\in[0,\infty]$$
        is Borel measurable.

        \item For any $f\in C_b(X_1\times X_2)$, the map
        $$X_1\ni x_1\mapsto\int f(x_1,x_2)d\nu(x_2)$$
        is continuous.
    \end{enumerate}
\end{fact}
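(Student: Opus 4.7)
My plan is to prove (a) in two steps---first showing lower semicontinuity of $\phi_W(x) := \nu(W[\{x\}])$ for \emph{open} $W$ using $\tau$-additivity, then extending Borel measurability to all Borel $W$ by a Dynkin-class argument---and then to deduce (b) via the layer-cake representation.

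For the open case, I would fix $x_0 \in X_1$ and $\alpha < \nu(W[\{x_0\}])$ and consider the family
\[
\mathcal{G} := \{V \subseteq X_2 \text{ open} : \exists \text{ open } U \subseteq X_1 \text{ with } x_0 \in U,\ U \times V \subseteq W\}.
\]
This family is upward directed (if $V_i$ is witnessed by $U_i$ for $i=1,2$, then $(U_1 \cap U_2) \times (V_1 \cup V_2) \subseteq W$, so $V_1 \cup V_2 \in \mathcal{G}$), and its union equals $W[\{x_0\}]$ by openness of $W$. Applying $\tau$-additivity to $\mathcal{G}$ gives $\nu(W[\{x_0\}]) = \sup_{V \in \mathcal{G}} \nu(V)$, so I may pick $V_0 \in \mathcal{G}$ with $\nu(V_0) > \alpha$ and an open $U_0 \ni x_0$ with $U_0 \times V_0 \subseteq W$; then $V_0 \subseteq W[\{x\}]$ for every $x \in U_0$, yielding $\phi_W > \alpha$ on $U_0$. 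To extend to all Borel $W$, I would observe that the class $\mathcal{W}$ of Borel sets for which $\phi_W$ is Borel measurable is a Dynkin system: it contains $X_1 \times X_2$ (on which $\phi$ is constantly $1$), is closed under proper differences (since $\nu$ is a probability measure, $\phi_{W_2 \setminus W_1} = \phi_{W_2} - \phi_{W_1}$ for $W_1 \subseteq W_2$), and is closed under disjoint countable unions by $\sigma$-additivity. Since open sets form a $\pi$-system generating $\mathcal{B}(X_1 \times X_2)$, Dynkin's $\pi$-$\lambda$ lemma gives $\mathcal{W} = \mathcal{B}(X_1 \times X_2)$.

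For (b), I would reduce to $0 \le f \le M$ by adding a constant (which affects neither continuity nor $C_b$ membership) and use the layer-cake identity
\[
\phi(x_1) := \int f(x_1, x_2) \, d\nu(x_2) = \int_0^M \nu\big(\{x_2 : f(x_1, x_2) > t\}\big) \, dt.
\]
For each $t$, the set $\{f > t\}$ is open in $X_1 \times X_2$, so by the open case of (a) the integrand is lower semicontinuous in $x_1$; Fatou's lemma then yields lower semicontinuity of $\phi$. Applying exactly the same reasoning to $g := M - f \in C_b(X_1 \times X_2)$ gives lower semicontinuity of $x_1 \mapsto \int g\, d\nu = M - \phi(x_1)$, equivalently upper semicontinuity of $\phi$, so $\phi$ is continuous.

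The substantive step is the open-set case of (a). The family $\mathcal{G}$ is typically uncountable and has no obvious countable cofinal subfamily, so $\sigma$-additivity alone is insufficient; this is precisely the juncture at which the $\tau$-additivity hypothesis is indispensable. Once that step is in hand, both the Dynkin extension and the layer-cake argument are routine.
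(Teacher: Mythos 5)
The paper gives no proof of this statement---it is quoted as Fact 417B from Fremlin---so the only comparison available is with Fremlin's own argument, which your proof essentially reproduces: the upward-directed family of open rectangles through $(x_0,y)$ is precisely how $\tau$-additivity is meant to enter, and the Dynkin extension from open to Borel sets and the layer-cake reduction from sets to functions are the standard completions of that idea. Part (a) is correct as written: $\mathcal{G}$ is indeed upward directed, its union is the section $W[\{x_0\}]$ by the definition of the product topology, lower semicontinuous functions are Borel, and the $\lambda$-system argument goes through because $\nu$ is finite.

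The one step I would not accept as written is the appeal to ``Fatou's lemma'' in (b). Lower semicontinuity of $\phi$ at a point of an arbitrary topological space $X_1$ is a statement about nets (equivalently, about the neighbourhood filter), and Fatou's lemma is false for nets: indexing by the finite subsets $F$ of $[0,1]$ ordered by inclusion, the net $\mathbbm{1}_F$ has $\liminf_F \int \mathbbm{1}_F\,dt = 0$ while $\int \liminf_F \mathbbm{1}_F\,dt = 1$. Sequential Fatou only yields sequential lower semicontinuity, which is strictly weaker unless $X_1$ is first countable. The gap is easy to close with the structure you already have: $h_x(t) := \nu(\{x_2 : f(x,x_2)>t\})$ is non-increasing in $t$ and bounded by $1$, so the lower Riemann sums $S_N(x) := \frac{M}{N}\sum_{k=1}^{N} h_x(kM/N)$ satisfy $S_N(x)\leqslant \phi(x)\leqslant S_N(x)+M/N$ \emph{uniformly} in $x$, and each $S_N$ is a finite sum of lower semicontinuous functions, hence lower semicontinuous. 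Given $\alpha<\phi(x_0)$, choose $N$ with $M/N<\phi(x_0)-\alpha$; then $S_N(x_0)>\alpha$, and lower semicontinuity of $S_N$ provides a neighbourhood of $x_0$ on which $\phi\geqslant S_N>\alpha$. With that substitution (and the same symmetric argument for $M-f$), the proof is complete.
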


Now, we could follow \cite{Fremlin} to derive 417C and use it in 444A to define the convolution of two totally finite quasi-Radon measures on a topological group, and then note associativity (444B) and state Fubini's formula for that convolution (444C). Perhaps, we could also note that the procedure works well for all $\tau$-additive totally finite Borel measures on a topological group (444E in \cite{Fremlin}). The disadvantage of that method is that we do not cover the situation of an action of a topological group $G$ on a topological space $X$, i.e. we would still need to define the convolution of a $\tau$-additive measure on $G$ with a $\tau$-additive measure on $X$, and then check some properties. Therefore, we shortly define the last type of convolution, which coincides with the convolution from 444A/444E in \cite{Fremlin} for the action of $G$ on itself. We start with bringing a result, on which everything is based.

\begin{fact}[417D in \cite{Fremlin}]\label{Fre.417D}
Let $\big((X_i,\mathcal{T}_i,\Sigma_i,\mu_i)\big)_{i\in I}$
be a finite family of effectively locally finite, $\tau$-additive topological measure spaces.
Then there is a unique complete, locally determined, effectively locally finite, $\tau$-additive
topological measure $\Tilde{\lambda}$ on $X=\prod\limits_{i\in I}X_i$, 
inner regular with respect to the Borel sets, such that 
$$\Tilde{\lambda}\Big(\prod\limits_{i\in I}A_i \Big)=\prod\limits_{i\in I}\mu_i(A_i)$$
whenever $A_i$ is open subset of $X_i$, where $i\in I$. The measure $\Tilde{\lambda}$ will be denoted $\prod\limits_{i\in I}\mu_i$.
\end{fact}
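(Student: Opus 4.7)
The plan is to proceed by induction on $|I|$, reducing to the case of two factors $(X_1,\mathcal{T}_1,\Sigma_1,\mu_1)$ and $(X_2,\mathcal{T}_2,\Sigma_2,\mu_2)$, with $X=X_1\times X_2$ carrying the product topology. On the Borel $\sigma$-algebra $\mathcal{B}(X)$ I would first define a candidate measure by the Fubini-style formula
$$
\Tilde{\lambda}_0(W)=\int_{X_1}\mu_2\bigl(W[\{x_1\}]\bigr)\,d\mu_1(x_1),
$$
where Fact \ref{Fre.417B}(1) guarantees that the integrand is Borel measurable. Countable additivity follows from monotone convergence, and the product formula $\Tilde{\lambda}_0(A_1\times A_2)=\mu_1(A_1)\mu_2(A_2)$ on open rectangles is immediate from the identity $(A_1\times A_2)[\{x_1\}]=A_2$ for $x_1\in A_1$ and $\emptyset$ otherwise. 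The desired quasi-Radon product $\Tilde{\lambda}$ is then obtained as the c.l.d.\ version of $\Tilde{\lambda}_0$, which preserves $\tau$-additivity and Borel inner regularity while supplying the remaining completeness, local determination, and effective local finiteness.

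The heart of the argument is $\tau$-additivity of $\Tilde{\lambda}_0$. Given an upwards directed family $\mathcal{G}$ of open subsets of $X$ with measurable union $G$, for each $x_1$ the family $\{H[\{x_1\}]:H\in\mathcal{G}\}$ is an upwards directed family of open subsets of $X_2$ with union $G[\{x_1\}]$, so $\tau$-additivity of $\mu_2$ gives
$$
\mu_2\bigl(G[\{x_1\}]\bigr)=\sup_{H\in\mathcal{G}}\mu_2\bigl(H[\{x_1\}]\bigr).
$$
One then needs to swap this supremum with the integral against $\mu_1$. Because each function $x_1\mapsto\mu_2(H[\{x_1\}])$ is lower semicontinuous (a standard consequence of Fact \ref{Fre.417B}(2), obtained by approximating $\mathbbm{1}_H$ from below by continuous functions supported in rectangles), the pointwise supremum of this upwards directed family of non-negative lower semicontinuous functions is again lower semicontinuous, and the $\tau$-additive form of monotone convergence — obtained by applying $\tau$-additivity of $\mu_1$ to the open superlevel sets of the approximants — produces $\Tilde{\lambda}_0(G)=\sup_{H\in\mathcal{G}}\Tilde{\lambda}_0(H)$. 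This interchange of an uncountable directed supremum with an integral is the step I expect to be the main obstacle: the analogous statement fails for arbitrary measures, and one really needs $\tau$-additivity of both factors in tandem.

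It remains to establish inner regularity with respect to the Borel sets and uniqueness. For inner regularity I would approximate a Borel set $W\subseteq X$ from inside by finite disjoint unions of Borel rectangles (using inner regularity of $\mu_1$ and $\mu_2$ coordinatewise) and pass to the limit via the $\tau$-additivity just established; the enlargement from the product $\sigma$-algebra to the full $\mathcal{B}(X)$ is absorbed in the passage to the c.l.d.\ version. For uniqueness, any competing measure $\Tilde{\lambda}'$ with the listed properties agrees with $\Tilde{\lambda}$ on open rectangles; since each open $U\subseteq X$ is the union of the upwards directed family of finite unions of its open subrectangles, $\tau$-additivity of both measures forces agreement on every open set, complementation handles closed sets (using finiteness of the total mass), and inner regularity with respect to the Borel sets propagates this to all of $\mathcal{B}(X)$, after which completeness and local determination pin down the c.l.d.\ version uniquely.
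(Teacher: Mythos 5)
This statement is quoted verbatim from Fremlin (417D) and the paper offers no proof of its own, so the only comparison available is with Fremlin's argument. Your sketch is a legitimate alternative route in the totally finite case, which is all the paper ever uses: Fremlin does not define the product by an iterated integral but rather builds $\Tilde{\lambda}$ out of the c.l.d.\ product measure by a separate construction on open sets (417C), recovering the formula $\Tilde{\lambda}(W)=\int \mu_2(W[\{x_1\}])\,d\mu_1(x_1)$ only afterwards as a consequence. Taking that formula as the \emph{definition}, as you do, works for probability measures and correctly isolates the two real difficulties: Borel measurability of the sections map (Fact \ref{Fre.417B}(1)) and the interchange of a directed supremum with the integral, which is exactly Fremlin's 414B for upwards directed families of non-negative lower semicontinuous functions under a $\tau$-additive measure. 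Two caveats. First, your justification of lower semicontinuity of $x_1\mapsto\mu_2(H[\{x_1\}])$ by ``approximating $\mathbbm{1}_H$ from below by continuous functions'' is not available here: $X_1,X_2$ are arbitrary topological spaces, and without complete regularity an open set need not admit such approximants. The claim itself is true and should instead be proved directly: by $\tau$-additivity of $\mu_2$, $\mu_2(H[\{x_1\}])$ is the supremum of $\mu_2(W[\{x_1\}])$ over finite unions $W$ of open rectangles contained in $H$, and for such a $W$ the function $x_1\mapsto \mu_2(W[\{x_1\}])$ is easily seen to be locally bounded below near $x_1$ on the intersection of the relevant first-coordinate factors. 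Second, your argument is genuinely restricted to totally finite measures (the inclusion--exclusion step in uniqueness and the subtraction in the closed-set step both use finiteness), so it does not recover Fremlin's statement for general effectively locally finite measures; since the paper only ever applies the fact to probability measures, this restriction is harmless here, but it should be stated.
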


\begin{lemma}[Convolution]\label{lemma.convolution}
    Assume that a topological group $G$ acts on a topological space $X$ via $\pi:G\times X\to X$.
    Let $\mu\in\CM_{\tau}(G)$ and let $\nu\in\CM_{\tau}(X)$.
    Then the formula
    $$(\mu\ast\nu)(E):=(\mu\times\nu)\big( \pi^{-1}[E]\big),$$
    where $E\subseteq X$ is Borel, defines a $\tau$-additive Borel probability measure on $X$.
\end{lemma}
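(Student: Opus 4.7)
The plan is to pull everything back to $G\times X$ through the continuous action map $\pi$ and exploit the $\tau$-additive product measure $\mu\times\nu$ furnished by Fact~\ref{Fre.417D}. Since $\pi$ is continuous, $\pi^{-1}[E]\in\mathcal{B}(G\times X)$ for every $E\in\mathcal{B}(X)$, and Fact~\ref{Fre.417D} produces $\mu\times\nu$ on a $\sigma$-algebra containing $\mathcal{B}(G\times X)$, so the assignment $E\mapsto(\mu\times\nu)(\pi^{-1}[E])$ is well-defined on $\mathcal{B}(X)$.

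First I would verify the probability-measure axioms. Clearly $(\mu\ast\nu)(\emptyset)=0$; since $\pi^{-1}[X]=G\times X$, the product formula in Fact~\ref{Fre.417D} gives $(\mu\ast\nu)(X)=\mu(G)\,\nu(X)=1$. Countable additivity follows from the observation that $\pi^{-1}$ preserves disjointness (if $\pi(g,x)$ were to land in $E_n\cap E_m$, then $E_n\cap E_m$ would be non-empty), combined with countable additivity of $\mu\times\nu$.

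For $\tau$-additivity, let $\mathcal{V}$ be a non-empty upwards directed family of open subsets of $X$ whose union $V_\ast:=\bigcup\mathcal{V}$ is Borel (automatically open). Continuity of $\pi$ shows that $\{\pi^{-1}[V] : V\in\mathcal{V}\}$ is a non-empty upwards directed family of open subsets of $G\times X$, and $\bigcup_{V\in\mathcal{V}}\pi^{-1}[V]=\pi^{-1}[V_\ast]$ is again open. Applying the $\tau$-additivity of $\mu\times\nu$ guaranteed by Fact~\ref{Fre.417D} to this family yields
\[
(\mu\ast\nu)(V_\ast)=(\mu\times\nu)(\pi^{-1}[V_\ast])=\sup_{V\in\mathcal{V}}(\mu\times\nu)(\pi^{-1}[V])=\sup_{V\in\mathcal{V}}(\mu\ast\nu)(V),
\]
which is precisely the required $\tau$-additivity.

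There is no genuine obstacle in this argument: the continuity of the action makes $\pi^{-1}$ transfer the open, directed, and Borel structures faithfully, so all the properties in the conclusion descend verbatim from those of the product measure granted by Fact~\ref{Fre.417D}. The only mild subtlety worth recording is that $\pi$ need not be injective, so one must check disjointness-preservation of $\pi^{-1}$ at the level of the image-sets rather than via a bijective correspondence, but this is immediate.
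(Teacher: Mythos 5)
Your argument is correct and follows essentially the same route as the paper: the paper also defines $\mu\ast\nu$ as the push-forward of the product measure from Fact~\ref{Fre.417D} along the continuous action map, gets total mass $1$ from the product formula applied to the open rectangle $G\times X$, and notes that $\tau$-additivity follows from the definitions — exactly the directed-preimage argument you spell out. Your write-up simply makes explicit the routine verifications the paper leaves to the reader.
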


\begin{proof}
    As $\mu\ast\nu$ is simply a push-forward of the measure $\mu\times\nu$ from Fact\ref{Fre.417D} it is a well-defined (Borel) measure. By the formula from Fact \ref{Fre.417D}, we have
    $$(\mu\ast\nu)(X)=(\mu\times\nu)(G\times X)=\mu(G)\cdot\nu(X)=1.$$
    Finally, the $\tau$-addivity follows easily by definitions.
\end{proof}

    In the assumption of Lemma \ref{lemma.convolution}, we have moreoever:
    \begin{enumerate}
        \item $$(\mu\ast\nu)(E)=\int (g\cdot\nu)(E) d\mu(g)=\int \nu(g^{-1}E)d\mu(g),$$
        \item and if $X$ is regular then 
        $$(\mu\ast\nu)(E)=\int \mu(E:x)d\nu(x),$$
    \end{enumerate}
    where $E:x:=\{g\in G\;|\; gx\in E\}$.
    
    To see the first point, we pass to completions $\hat{\mu}$ and $\hat{\nu}$ of $\mu$ and $\nu$ respectively.
    Then we use Theorem 417Ha in \cite{Fremlin} to note that
    \begin{IEEEeqnarray*}{rCl}
    (\mu\ast\nu)(E) &=& (\hat{\mu}\times\hat{\nu})\big(\pi^{-1}[E]\big) = \int (\mathbbm{1}_E\circ\pi)(g,x)d(\hat{\mu}\times\hat{\nu}) \\
    &=& \int \int (\mathbbm{1}_E\circ\pi)(g,x)d\hat{\nu}(x)\,d\hat{\mu}(g) = \int \Big( \int \mathbbm{1}_{g^{-1}E}(x)d\hat{\nu}(x)\Big)d\hat{\mu}(g) \\
    &=& \int \hat{\nu}(g^{-1}E)d\hat{\mu}(g)=\int \nu(g^{-1}E)d\hat{\mu}(g).
    \end{IEEEeqnarray*}
    By Proposition 212Fb in \cite{Fremlin}, the last integral is equal to
    $$\int \nu(g^{-1}E)d\mu(g) = \int (g\cdot \nu)(E)d\mu(g)$$
    (as $g\cdot\nu=\nu\circ g^{-1}$). 
    Note that by Corollary \ref{cor.444F}, the function in the last integral is Borel measurable, provided $X$ is regular.

    For the second point, we need a refined (in the case of probability measures) version of Theorem 417C from \cite{Fremlin} and its point (iv).

\begin{remark}\label{rem:refined.417C}
    Let $X$ and $Y$ be regular topological spaces, let $\mu\in\CM_{\tau}(X)$ and $\nu\in\CM_{\tau}(Y)$, 
    and let $\Tilde{\lambda}=\mu\times\nu$ be the product measure as in Fact \ref{Fre.417D}.
    Then for every Borel set $W\subset X\times Y$ we have
    $$(\mu\times\nu)(W)=\int \nu(W_x)d\mu(x) = \int \mu(W^y)d\nu(y),$$
    where $W_x:=\{y\in Y\;|\; (x,y)\in W\}$ and $W^y:=\{x\in X\;|\;(x,y)\in W\}$.
\end{remark}

\begin{proof}
    By point (iv) of Theorem 417C from \cite{Fremlin}, our remark holds in the case of $W$ being an open set.
    Then, then complement $W^c$ is closed and we have
    \begin{IEEEeqnarray*}{rCl}
    (\mu\times\nu)(W^c) &=& 1 - (\mu\times\nu)(W) = 1-\int \nu(W_x)d\mu(x) \\
    &=& \int \Big( 1-\nu(W_x) \Big)d\mu(x) = \int \nu(W^c_x)d\mu(x),
    \end{IEEEeqnarray*}
    and similarly $(\mu\times\nu)(W^c)=\int\mu\big((W^c)^y\big)d\nu(y)$. Thus our remark holds for both the open and the closed sets.

    Since $X\times Y$ is regular, Fact \ref{Fre.415C} implies that the completion of $\mu\times\nu$ is a quasi-Radon measure.
    Hence, $\mu\times\nu$ is inner regular with respect to the closed sets. Let $W\subseteq X\times Y$ be a Borel set now and let $\epsilon>0$.
    By Fact \ref{Fre.412Wb}, there exist a closed subset $F_{\epsilon}\subseteq W$ and an open subset $H_{\epsilon}\supseteq W$ such that $(\mu\times\nu)(H_{\epsilon}\setminus F_{\epsilon})<\epsilon$.
    As $(F_{\epsilon})_x\subseteq W_x\subseteq (H_{\epsilon})_x$, we have
    $$(\mu\times\nu)(W)-\epsilon\leqslant(\mu\times\nu)(F_{\epsilon})\leqslant (\mu\times\nu)(W)\leqslant (\mu\times\nu)(H_{\epsilon})\leqslant(\mu\times\nu)(W)+\epsilon.$$
    Again, as $(F_{\epsilon})_x\subseteq W_x\subseteq (H_{\epsilon})_x$, we have
    $$\int \nu\big( (F_{\epsilon})_x\big)d\mu(x)\leqslant \int \nu\big(W_x)d\mu(x) \leqslant \int \nu\big( (H_{\epsilon})_x\big)d\mu(x).$$
    By what we proved up to this moment, we conclude that for every $\epsilon>0$ it is
    $$(\mu\times\nu)(W)-\epsilon\leqslant\int \nu(W_x)d\mu(x)\leqslant (\mu\times\nu)(W)+\epsilon,$$
    and we get that $(\mu\times\nu)(W)=\int\nu(W_x)d\mu(x)$ for any Borel set $W\subseteq X\times Y$.
    The other equality if proved in a smillar manner.
\end{proof}

\section{Convolution semigroup}
Coming back to the action of $G$ on $X$, which is a regular space, we see that
$$(\mu\ast\nu)(E)=\int \mu(E:x)d\nu(x),$$
where $E\subseteq X$ is a Borel set, follows directly by Remark \ref{rem:refined.417C}.

\begin{lemma}\label{lemma:convolution.functions}
    Assume that a topological group $G$ acts on a regular topological space $X$ via $\pi:G\times X\to X$.
    Let $\mu\in\CM_{\tau}(G)$, $\nu\in\CM_{\tau}(X)$.
    Then for any $(\mu \ast \nu)$-integrable $f:X\to\Rr$, we have
    \begin{IEEEeqnarray*}{rCl}
    \int fd(\mu\ast\nu) &=& \int f(g\cdot x)d(\mu\times\nu)(g,x)=\int\int f(g\cdot x)d\mu(g)\,d\nu(x) \\
        &=& \int\int f(g\cdot x)d\nu(x)\,d\mu(g).
    \end{IEEEeqnarray*}
\end{lemma}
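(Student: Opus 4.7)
The plan is to reduce the three equalities to (i) the change-of-variables formula for push-forward measures and (ii) a Fubini-type statement for the product measure $\mu\times\nu$ that extends Remark \ref{rem:refined.417C} from indicator functions to arbitrary integrable functions.

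First I would read off from Lemma \ref{lemma.convolution} that $\mu\ast\nu$ is by construction the push-forward $\pi_{*}(\mu\times\nu)$ of the product measure under the continuous (hence Borel) action map $\pi:G\times X\to X$. The standard change-of-variables formula for push-forwards then yields, for any $(\mu\ast\nu)$-integrable $f:X\to\Rr$, that the composition $f\circ\pi$ is $(\mu\times\nu)$-integrable and
$$\int f\,d(\mu\ast\nu)=\int (f\circ\pi)\,d(\mu\times\nu)=\int f(g\cdot x)\,d(\mu\times\nu)(g,x).$$
This delivers the first claimed equality.

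For the two iterated-integral expressions I would promote Remark \ref{rem:refined.417C} from Borel sets to general integrable Borel-measurable functions on $G\times X$ by the usual three-step ladder: linearity for (non-negative) simple functions, monotone convergence for non-negative measurable functions, and splitting into positive and negative parts for integrable ones. The required measurability of the partial integrals $g\mapsto\int h(g,x)\,d\nu(x)$ and $x\mapsto\int h(g,x)\,d\mu(g)$ is guaranteed by Fact \ref{Fre.417B}(a) for indicators of Borel sets (applied to sublevel sets) and is preserved under monotone limits. Specializing to $h(g,x)=f(g\cdot x)$ then produces both iterated integrals, with the two orderings of integration corresponding to the two equalities in Remark \ref{rem:refined.417C}.

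The main point requiring care is what ``$(\mu\ast\nu)$-integrable'' is taken to mean. If $f$ is allowed to be measurable only for the completion of $\mu\ast\nu$, then $f\circ\pi$ need not be Borel measurable on $G\times X$, merely measurable for the completion of $\mu\times\nu$; in that case one has to carry out the three-step extension above inside the completion, which is precisely the content of Theorem 417Ha in \cite{Fremlin} already invoked in the discussion following Lemma \ref{lemma.convolution}. Apart from this bookkeeping around completions, no genuine obstacle appears.
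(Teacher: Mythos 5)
Your proposal is correct and follows essentially the same route as the paper, whose proof simply says to repeat Fremlin's Proposition 444C: there too the formula comes from viewing $\mu\ast\nu$ as the push-forward of the $\tau$-additive product under $\pi$ and then applying Fubini for that product (which you reconstruct from Remark \ref{rem:refined.417C} by the simple-function/monotone-convergence ladder instead of citing 417H directly). In effect you have written out the details, including the completion bookkeeping via 417Ha, that the paper delegates to \cite{Fremlin}.
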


\begin{proof}
    The proof of Proposition 444C from \cite{Fremlin} can be repeated here.
\end{proof}

To show associativity of the convolution of measures, we need one more fact:

\begin{fact}[444F in \cite{Fremlin}]\label{Fre.444F}
    Let $G$ be a topological group acting on a topological space $X$, and let $\nu$ be a $\sigma$-finite quasi-Radon measure on $X$.
    \begin{enumerate}
        \item $G\ni g\mapsto \nu(gE)\in[0,\infty]$ is Borel measureable for any Borel $E\subseteq X$.
    
        \item If $f:X\to\Rr$ is Borel measurable, then 
        $$Q =\{g\in G\;|\;\int g\cdot f(x)d\nu(x) \text{ is defined in }[-\infty,\infty]\},$$ 
        where $g\cdot f(x):=f(g^{-1}\cdot x)$, is a Borel set, and 
        $$Q\ni g\mapsto\int g\cdot f(x)d\nu(x)\in[-\infty,\infty]$$
        is Borel measurable
    \end{enumerate}
\end{fact}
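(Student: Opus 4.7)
The plan is to reduce (a) to Fact~\ref{Fre.417B}(a) by a change-of-variable trick on the product $G \times X$, and then to bootstrap (b) from (a) via the standard simple-function approximation.

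For (a), the key observation is that $gE = \{y \in X \mid g^{-1}y \in E\}$. I would consider the homeomorphism
\[
\phi : G \times X \to G \times X, \qquad \phi(g,y) := (g, g^{-1}y),
\]
whose continuity comes from continuity of the action and of inversion in $G$ (the inverse is $(g,z) \mapsto (g, gz)$), and set $W := \phi^{-1}(G \times E)$. Then $W$ is a Borel subset of $G \times X$, and its $g$-section is exactly $W[\{g\}] = gE$. If $\nu$ were a $\tau$-additive probability measure, Fact~\ref{Fre.417B}(a) applied to $W$ would immediately yield Borel measurability of $g \mapsto \nu(W[\{g\}]) = \nu(gE)$. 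For the $\sigma$-finite quasi-Radon case, I would decompose $X = \bigsqcup_n X_n$ into pairwise disjoint Borel sets of finite $\nu$-measure; the restrictions $\nu_n(A) := \nu(A \cap X_n)$ are finite $\tau$-additive Borel measures on $X$, which after normalization fall under the probability case, and Borel measurability is preserved under the countable sum $g \mapsto \nu(gE) = \sum_n \nu_n(gE)$.

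For (b), I would proceed in the usual four steps. For $f = \mathbbm{1}_E$ we get $\int g \cdot f\, d\nu = \nu(gE)$, Borel in $g$ by (a), with $Q = G$. Linearity extends this to non-negative Borel simple functions. For non-negative Borel $f$, choose simple $f_n \uparrow f$; then $g \cdot f_n \uparrow g \cdot f$ pointwise in $X$, and monotone convergence gives $\int g \cdot f\, d\nu = \lim_n \int g \cdot f_n\, d\nu$ as a pointwise-increasing limit of Borel maps into $[0,\infty]$, still with $Q = G$. For a general Borel $f$, I split $f = f^+ - f^-$; the maps $h^\pm(g) := \int g \cdot f^\pm\, d\nu$ are Borel measurable into $[0,\infty]$ by the previous step, so
\[
Q \;=\; G \setminus \bigl( \{h^+ = \infty\} \cap \{h^- = \infty\} \bigr)
\]
is Borel, and on $Q$ the function $g \mapsto h^+(g) - h^-(g)$ is Borel measurable.

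The main obstacle is (a): I need to manufacture a Borel subset of $G \times X$ whose $g$-sections give the translated sets $gE$, and this is exactly what the twisting homeomorphism $\phi$ achieves. The only remaining technical point is the passage from the probability case of Fact~\ref{Fre.417B}(a) to the $\sigma$-finite case, which amounts to checking that restrictions $\nu_n$ inherit $\tau$-additivity -- a standard localization property in Fremlin's framework -- and to invoking countable additivity of the decomposition.
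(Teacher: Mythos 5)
Your proposal is correct, but note that the paper itself does not prove this statement at all: it is imported verbatim as Fact 444F from Fremlin, so there is no internal proof to compare against. What you wrote is essentially the standard argument (and close in spirit to Fremlin's own): the twisting homeomorphism $\phi(g,y)=(g,g^{-1}y)$ turns $gE$ into the $g$-section of the Borel set $W=\phi^{-1}(G\times E)$, so part (a) drops out of Fact~\ref{Fre.417B}(a), and part (b) is the routine bootstrap through indicators, simple functions, monotone convergence, and the decomposition $f=f^+-f^-$, with $Q$ exactly the set where not both $\int g\cdot f^+\,d\nu$ and $\int g\cdot f^-\,d\nu$ are infinite. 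The only step you leave to a ``standard localization property'' is the reduction from $\sigma$-finite to the probability case, and it is indeed standard: subspace measures of effectively locally finite $\tau$-additive (in particular quasi-Radon) measures are again $\tau$-additive (Fremlin 414K/415B), so each normalized piece $\nu_n(\cdot)=\nu(\cdot\cap X_n)$ restricted to the Borel sets of $X$ satisfies the hypotheses of Fact~\ref{Fre.417B}, and countable additivity gives $\nu(gE)=\sum_n\nu_n(gE)$ as a pointwise sum of Borel functions; one should only remark that the sets $X_n$ witnessing $\sigma$-finiteness lie in the domain $\Sigma$ of $\nu$ rather than in the Borel $\sigma$-algebra, which is harmless for this argument. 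Had you worked directly with Fremlin's original 417B (stated for effectively locally finite $\tau$-additive measures rather than the paper's probability simplification), the decomposition step would not even be needed.
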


\begin{cor}\label{cor.444F}
    Let $G$ be a topological group acting on a regular topological space $X$, and let $\nu$ be a $\tau$-additive Borel probability measure on $X$. Then 
    $$G\ni g\mapsto \nu(gE)=\hat{\nu}(gE)$$
    is Borel measureable for any Borel $E\subseteq X$.
\end{cor}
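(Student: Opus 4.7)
The plan is to reduce the statement directly to Fact \ref{Fre.444F}(1) by passing to the completion $\hat{\nu}$ and verifying the hypotheses of that fact are met.

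First I would note that since $X$ is regular and $\nu$ is a $\tau$-additive Borel probability measure, Fact \ref{Fre.415C}(b) applies and tells us that the completion $\hat{\nu}$ is a quasi-Radon measure on $X$. Because $\nu$ (hence $\hat{\nu}$) is a probability measure, it is automatically $\sigma$-finite, so the hypotheses of Fact \ref{Fre.444F} are satisfied for $\hat{\nu}$.

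Next I would observe that for any $g \in G$, the map $x \mapsto g \cdot x$ is a homeomorphism of $X$ (its inverse is the continuous map induced by $g^{-1}$), so if $E \subseteq X$ is Borel then so is $gE$. Hence $\nu(gE)$ is defined, and since $\hat{\nu}$ extends $\nu$ on the Borel $\sigma$-algebra, we have the equality $\nu(gE) = \hat{\nu}(gE)$ that appears in the statement. Applying Fact \ref{Fre.444F}(1) to the quasi-Radon measure $\hat{\nu}$ and the Borel set $E$, the map $G \ni g \mapsto \hat{\nu}(gE) \in [0,\infty]$ is Borel measurable, which is exactly what we want.

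There is essentially no obstacle here: the corollary is a packaging of Fact \ref{Fre.444F}(1) in the setting we have adopted (regular $X$ with $\tau$-additive Borel probability measures rather than quasi-Radon measures on an unspecified space). The only small point worth flagging is the identification $\nu(gE) = \hat{\nu}(gE)$, for which one just uses that completion does not change the measure on Borel sets together with the fact that $gE$ is Borel.
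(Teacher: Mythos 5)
Your argument is correct and matches the intended derivation: the paper's corollary is exactly Fact \ref{Fre.444F}(1) applied to the completion $\hat{\nu}$, which is quasi-Radon (hence $\sigma$-finite, being a probability measure) by Fact \ref{Fre.415C}(b) thanks to the regularity of $X$, together with the observation that $\hat{\nu}$ and $\nu$ agree on the Borel set $gE$. Nothing further is needed.
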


\begin{lemma}
     Assume that a topological group $G$ acts on a regular topological space $X$ via $\pi:G\times X\to X$.
    Let $\mu_1,\mu_2\in\CM_{\tau}(G)$ and let $\nu\in\CM_{\tau}(X)$.
    We have
    $$(\mu_1\ast\mu_2)\ast\nu=\mu_1\ast(\mu_2\ast\nu).$$
    Moreover $\delta_1\ast\nu=\nu$ for the Dirac measure $\delta_1$ on the neutral element $1$ of group $G$.
\end{lemma}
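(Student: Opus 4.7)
The plan is to verify each equality against bounded Borel functions $f : X \to \Rr$: two $\tau$-additive Borel probability measures on $X$ agree iff they assign the same integral to every bounded Borel function, so it suffices to compare the corresponding integrals. Both equalities then reduce to iterated-integration identities, with Lemma \ref{lemma:convolution.functions} as the basic tool.

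The identity statement $\delta_1 \ast \nu = \nu$ is easiest: from the definition $(\delta_1 \ast \nu)(E) = (\delta_1 \times \nu)(\pi^{-1}[E])$ together with Remark \ref{rem:refined.417C}, integrating first in $g$ against $\delta_1$ picks out the section at $g = 1$ and yields $\nu(\{x : 1 \cdot x \in E\}) = \nu(E)$, since $1 \cdot x = x$.

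For associativity I would unfold each side using Lemma \ref{lemma:convolution.functions} twice: once for the action of $G$ on $X$ and once for $G$ acting on itself, noting that the intermediate integrands are bounded Borel because the group multiplication and action maps are continuous. This produces
\begin{IEEEeqnarray*}{rCl}
\int f \, d\big((\mu_1 \ast \mu_2) \ast \nu\big) &=& \int \int \int f(g_1 g_2 \cdot x) \, d\mu_1(g_1) \, d\mu_2(g_2) \, d\nu(x),
\end{IEEEeqnarray*}
and, using the action law $g_1 \cdot (g_2 \cdot x) = (g_1 g_2) \cdot x$,
\begin{IEEEeqnarray*}{rCl}
\int f \, d\big(\mu_1 \ast (\mu_2 \ast \nu)\big) &=& \int \int \int f(g_1 g_2 \cdot x) \, d\mu_2(g_2) \, d\nu(x) \, d\mu_1(g_1).
\end{IEEEeqnarray*}

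The main obstacle is then the Fubini-type reordering of these two triple iterated integrals. My plan is to form the triple product $\mu_1 \times \mu_2 \times \nu$ on $G \times G \times X$ via Fact \ref{Fre.417D} (applied to a three-element family) and show both iterated integrals equal its integral of the bounded Borel function $(g_1, g_2, x) \mapsto f(g_1 g_2 \cdot x)$. By the uniqueness clause in Fact \ref{Fre.417D}, this triple product agrees with both $(\mu_1 \times \mu_2) \times \nu$ and $\mu_1 \times (\mu_2 \times \nu)$, so two successive applications of Remark \ref{rem:refined.417C} recover both iterated orderings; the measurability of sections needed at each step is supplied by Fact \ref{Fre.417B}(1) together with Corollary \ref{cor.444F}.
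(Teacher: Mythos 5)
Your argument is correct, but it takes a genuinely different route to the crucial Fubini step than the paper does. The paper never forms a triple product: it works directly with Borel sets, writes $\big((\mu_1\ast\mu_2)\ast\nu\big)(E)=\int\nu(g^{-1}E)\,d(\mu_1\ast\mu_2)(g)$ (measurability from Corollary \ref{cor.444F}), expands this single integral over $G$ by Lemma \ref{lemma:convolution.functions} applied to $G$ acting on itself, and the only exchange of integration order it needs is between the two $G$-variables --- which is already licensed by Lemma \ref{lemma:convolution.functions}, since both iterated orders equal the integral against $\mu_1\ast\mu_2$; the $X$-integration stays hidden inside $\nu(\cdot^{-1}E)$ throughout, so no three-factor Fubini ever arises. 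You instead unfold both sides into genuine triple iterated integrals of $f(g_1g_2\cdot x)$ and justify their equality by building the $\tau$-additive product $\mu_1\times\mu_2\times\nu$ via Fact \ref{Fre.417D}, identifying it with $(\mu_1\times\mu_2)\times\nu$ and $\mu_1\times(\mu_2\times\nu)$ through the uniqueness clause (checking the open-box formula, which works because open rectangles $A\times B$ are open in $G\times G$), and then applying Remark \ref{rem:refined.417C} twice. This is heavier machinery --- and you should either restrict to indicators $f=\mathbbm{1}_E$ or say a word about passing from Borel sets to bounded Borel functions in Remark \ref{rem:refined.417C}, and note that the two-factor 417D product restricted to Borel sets is again in $\CM_\tau$ so that the Remark applies --- but it buys you an explicit associativity statement for the $\tau$-additive product and a full reordering principle, which the paper's leaner computation deliberately sidesteps. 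Your treatment of $\delta_1\ast\nu=\nu$ via the section at $g=1$ is fine (the paper dismisses it as straightforward), and both approaches share the same implicit use of regularity of $G$ (and $G\times G$) when invoking Lemma \ref{lemma:convolution.functions} for $G$ acting on itself.
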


\begin{proof}
    Let $\pi:G\times X\to X$ denote the action of $G$ on $X$, and let $E\subseteq X$ be a Borel set.
    Now, by applying Lemma \ref{lemma:convolution.functions} and previously derived formulas, we get
    \begin{IEEEeqnarray*}{rCl}
    \big( (\mu_1\ast\mu_2)\ast\nu\big)(E) &=& \Big( (\mu_1\ast\mu_2)\times \nu\Big)\big(\pi^{-1}[E]\big) \\
    &=& \int \nu(g^{-1}E)d(\mu_1\ast\mu_2)\qquad \Big(g\mapsto\nu(g^{-1}E)\text{ is $\mu_1\ast\mu_2$-meas. by Cor. \ref{cor.444F}}\Big) \\
    &=& \int \int \nu\big( (gh)^{-1}E\big)d\mu_1(g)d\mu_2(h) \\
    &=& \int \int \nu\big( h^{-1}(g^{-1}E)\big)d\mu_1(g)d\mu_2(h) \\
    &=& \int \Big(\int \nu\big( h^{-1}(g^{-1}E)\big)d\mu_2(h)\Big) d\mu_1(g) \\
    &=& \int \Big( (\mu_2\ast\nu)(g^{-1}E) \Big)d\mu_1(g) \\
    &=& \big(\mu_1\ast(\mu_2\ast\nu)\big)(E).
    \end{IEEEeqnarray*}
    The proof of the moreover part is straightforward.
\end{proof}

\begin{cor}\label{cor:A15}
    If $G$ is a topological group, the set $\CM_{\tau}(G)$ equipped with the convolution action on itself forms a semi-group. Moreover, in the case of a Polish group $G$, the set $\CM_\tau(G)$ with the convolution and equipped with the weak topology (see below) forms a Polish semigorup (cf. Proposition 4.3 in \cite{Yano22}).
\end{cor}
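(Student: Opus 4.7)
The plan is to handle the two assertions separately, with the bulk of the work falling on the Polish-semigroup claim.

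The semigroup structure is essentially immediate: I would apply the preceding lemma with $X=G$ and the action by left multiplication. Associativity of $\ast$ on $\CM_\tau(G)$ then coincides with the identity just established, and the moreover part of that lemma provides $\delta_1$ as a two-sided identity, so in fact $(\CM_\tau(G),\ast)$ is a monoid, and a fortiori a semigroup.

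For the moreover clause, assume $G$ is Polish. First I would observe, via Fact \ref{Fre.415D}, that every Borel probability measure on $G$ is $\tau$-additive, so $\CM_\tau(G)$ coincides with the full space $\CM(G)$ of Borel probability measures. Next I would recall the classical fact that $\CM(G)$ equipped with the weak topology is Polish: a countable dense subset arises from rational convex combinations of Dirac masses at points of a countable dense subset of $G$, while complete metrizability is provided by the Lévy–Prokhorov metric. These two steps are essentially bookkeeping given the classical literature.

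The main obstacle is to verify joint weak continuity of the convolution, which is what upgrades $(\CM_\tau(G),\ast)$ from an algebraic semigroup living on a Polish space to a genuine Polish (topological) semigroup. My plan is: given sequences $\mu_n\to\mu$ and $\nu_n\to\nu$ weakly in $\CM(G)$, and a test function $f\in C_b(G)$, I would use Lemma \ref{lemma:convolution.functions} to write
$$\int f\,d(\mu_n\ast\nu_n)=\int\!\!\int f(gh)\,d\mu_n(g)\,d\nu_n(h).$$
Since $G$ is a topological group, joint continuity of multiplication gives $F(g,h):=f(gh)\in C_b(G\times G)$, so it suffices to show that weak convergence passes to products, i.e.\ $\mu_n\times\nu_n\to\mu\times\nu$ weakly on $G\times G$. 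This I would obtain by combining Fact \ref{Fre.417B}(b) (which ensures the partial integral $g\mapsto\int F(g,h)\,d\nu(h)$ is continuous and bounded) with a standard double-limit / dominated convergence argument in the variable $g$. This is precisely the content of Proposition 4.3 in \cite{Yano22}, whose citation at the end of the statement lets us conclude without reproving what is a well-known fact in the Polish setting.
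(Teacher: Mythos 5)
Your proposal matches the paper's (implicit) justification: the semigroup structure is exactly the preceding associativity lemma applied with $X=G$ acting on itself by left translation (a topological group is a regular space, so the lemma applies), and the Polish case is delegated to Proposition 4.3 of \cite{Yano22}, which is precisely what the paper does. Two minor nitpicks that do not affect correctness: the lemma's moreover part gives $\delta_1$ only as a \emph{left} identity, so the ``two-sided identity/monoid'' claim is an overstatement as cited (though true, and irrelevant to the semigroup assertion), and your ``double-limit / dominated convergence'' sketch of joint continuity would really need the standard result that weak convergence of marginals yields weak convergence of product measures on separable metric spaces --- but you defer to the cited proposition for that step anyway.
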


Now, we would like to equip $\CM_{\tau}(X)$ with a topology, and then show that $(\CM_{\tau}(X),\CM_{\tau}(G))$
is a discrete dynamical system.
We simply follow the standard approach here, so if $X$ is a topological space then we equip 
$\CM_{\tau}(X)$ with the weak topology, where the basic open sets are of the form
$$\bigcap\limits_{i\leqslant n}\{\nu\in\CM_\tau(X)\;|\;r_i<\int f_id\nu< s_i\},$$
where $f_i\in C_b(X)$ and $r_i,s_i\in\Rr$ for $i\leqslant n$.
Let us note, that $X\ni x\mapsto \delta_x\in\CM_\tau(X)$ is an embedding of topological spaces,
and $X$ is homeomorphic with its image provided $X$ is normal (e.g. when $X$ is the space of types $S_x(M)$).
By $\conv(X)$ we denote the set of finite convex combinations of Dirac measures $\delta_x$ with $x\in X$.
Let us note that if $g\in G$ and $\nu\in\CM_\tau(X)$, then
$\delta_g\ast\nu=g_\ast\nu$. In particular, if $g\in G$ and $x\in X$ then $\delta_g\ast\delta_x=\delta_{g\cdot x}$.

We need to show that the map
$$\pi_\CM:\CM_\tau(G)\times\CM_\tau(X)\to\CM_\tau(X),$$
given by $(\mu,\nu)\mapsto \mu\ast\nu$ is continuous in the second coordinate (i.e. it is jointly-continuous/continuous if we consider $\CM_\tau(G)$ with the discrete topology).

\begin{remark}\label{rem:continuity.piM}
    The above map $\pi_\CM$ is continuous in the second coordinate. To see it, we assume that
    $\mu\ast\nu$ belongs to some open basic set, i.e.:
    $$\mu\ast\nu\in \bigcap\limits_{i\leqslant n}\{\lambda\in\CM_\tau(X)\;|\;r_i<\int f_id\nu< s_i\}.$$
    Now, define $h_i:X\times G\to \Rr$ by $h_i(x,g):=f_i(g\cdot x)$, where $i\leqslant n$.
    As $f_i\in C_b(X)$ and the map $\pi:G\times X\to X$ is jointly-continuous, we have that each $h_i\in C_b(X\times G)$. Fact \ref{Fre.417B} implies that the map
    $$H_i:X\ni x\mapsto \int h_i(x,g)d\mu(g)\;=\; \int f_i(g\cdot x)d\mu(g)$$
    is continuous, thus $H_i\in C_b(X)$.
    Fix some $i\leqslant n$ and not the that, by Lemma \ref{lemma:convolution.functions},
    $$(r_i,s_i)\ni \int f_id(\mu\ast\nu)= \int H_i d\nu.$$
    Similarly for any $\nu'\in\CM_\tau(X)$, if 
    $$\nu'\in \bigcap\limits_{i\leqslant n}\{\lambda\in\CM_\tau(X)\;|\;r_i<\int H_id\nu< s_i\}$$
    then 
    $$\int f_i d(\mu\ast\nu')=\int H_i d\nu'\in (r_i,s_i).$$
\end{remark}

\begin{cor}\label{cor:discrete.system}
     Let $G$ be a topological group acting on a regular topological space $X$.
     Then $(\CM_\tau(X),\CM_\tau(G))$ is a discrete dynamical system (perhaps non-compact).
\end{cor}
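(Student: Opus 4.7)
The plan is to simply assemble the pieces that have already been established in the preceding lemmas, corollaries, and remark, and to observe that together they provide exactly the data required for a discrete dynamical system $(\CM_\tau(X),\CM_\tau(G))$. A discrete dynamical system here consists of: (i) a topological space on which one acts (the phase space $\CM_\tau(X)$ with its weak topology), (ii) a semigroup with identity acting on it (which will be $\CM_\tau(G)$ regarded as a discrete semigroup), (iii) a well-defined action map, and (iv) continuity of that map, jointly in both variables when $\CM_\tau(G)$ carries the discrete topology, equivalently continuity in the phase-space coordinate for each fixed element of the acting semigroup.

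The first step is to invoke Lemma \ref{lemma.convolution}: for every $\mu\in\CM_\tau(G)$ and $\nu\in\CM_\tau(X)$ the convolution $\mu\ast\nu$ is again a $\tau$-additive Borel probability measure on $X$. This gives a well-defined map
\[
\pi_\CM\colon \CM_\tau(G)\times\CM_\tau(X)\longrightarrow\CM_\tau(X),\qquad (\mu,\nu)\mapsto \mu\ast\nu.
\]
The second step is to note that this is an action of the semigroup $\CM_\tau(G)$ (with convolution, as in Corollary \ref{cor:A15}) on $\CM_\tau(X)$: associativity $(\mu_1\ast\mu_2)\ast\nu=\mu_1\ast(\mu_2\ast\nu)$ was proved in the previous lemma, and the identity acts trivially, since $\delta_1\ast\nu=\nu$ by the moreover part of that lemma.

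The third step is continuity. Equipping $\CM_\tau(G)$ with the discrete topology, joint continuity of $\pi_\CM$ is the same as continuity of $\nu\mapsto \mu\ast\nu$ for each fixed $\mu$. This is precisely the content of Remark \ref{rem:continuity.piM}, which shows that for $\mu$ fixed the convolution map pulls back a basic weak-open set $\{\lambda : r_i<\int f_i d\lambda<s_i,\ i\le n\}$ around $\mu\ast\nu$ to the basic weak-open set $\{\lambda : r_i<\int H_i d\lambda<s_i,\ i\le n\}$ around $\nu$, where $H_i(x)=\int f_i(g\cdot x)\,d\mu(g)\in C_b(X)$ (the continuity and boundedness of $H_i$ uses Fact \ref{Fre.417B} together with the joint continuity of the action $\pi$).

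Combining these three observations gives the statement. There is no serious obstacle: the only subtle point, continuity of the convolution in the measure-on-$X$ variable, was already handled in Remark \ref{rem:continuity.piM} via Fact \ref{Fre.417B} and Lemma \ref{lemma:convolution.functions}. The proof can therefore be written in one short paragraph: define $\pi_\CM$ by Lemma \ref{lemma.convolution}, invoke the associativity/identity lemma to see it is a semigroup action, and invoke Remark \ref{rem:continuity.piM} for continuity in the second coordinate — hence, with $\CM_\tau(G)$ taken discrete, $(\CM_\tau(X),\CM_\tau(G))$ is a discrete dynamical system (compactness of $\CM_\tau(X)$ is not claimed, which is why the parenthetical ``perhaps non-compact'' appears in the statement).
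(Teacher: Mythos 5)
Your proposal is correct and follows essentially the same route as the paper, which states the corollary as an immediate consequence of Lemma \ref{lemma.convolution}, the associativity lemma with $\delta_1\ast\nu=\nu$, and Remark \ref{rem:continuity.piM} (continuity in the second coordinate, i.e.\ joint continuity with $\CM_\tau(G)$ discrete). Nothing is missing; your assembly of these ingredients is exactly the intended argument.
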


\begin{proposition}\label{prop: equal Ellis2}
Let $G$ be a topological group acting on a regular topological space $X$.
Assume that $\CM_\tau(X)$ is compact Hausdorff.
Then
$E(\CM_\tau(X),\conv(G)) = E(\CM_\tau(X),\CM_\tau(G))$. 
\end{proposition}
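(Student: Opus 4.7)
The inclusion $E(\CM_\tau(X),\conv(G)) \subseteq E(\CM_\tau(X),\CM_\tau(G))$ is immediate from $\conv(G) \subseteq \CM_\tau(G)$ and monotonicity of closure in the product topology on $\CM_\tau(X)^{\CM_\tau(X)}$. For the reverse inclusion, my plan is to reduce the problem to two intermediate claims: (a) $\conv(G)$ is weakly dense in $\CM_\tau(G)$, and (b) for each fixed $\sigma \in \CM_\tau(X)$, the map $\CM_\tau(G) \ni \mu \mapsto \mu\ast\sigma \in \CM_\tau(X)$ is continuous for the weak topologies. Granted these, for any $\mu \in \CM_\tau(G)$ I would pick by (a) a net $(\nu_\alpha)$ in $\conv(G)$ converging weakly to $\mu$, and (b) would yield $\nu_\alpha\ast\sigma \to \mu\ast\sigma$ weakly for every $\sigma$. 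This is exactly product-topology convergence of the convolution operators $T_{\nu_\alpha} \to T_\mu$ in $\CM_\tau(X)^{\CM_\tau(X)}$, placing $T_\mu$ in the Ellis closure of $\conv(G)$.

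For (b), I would argue exactly as in Remark~\ref{rem:continuity.piM}, but now moving the test function into the first coordinate. Given $f \in C_b(X)$, Lemma~\ref{lemma:convolution.functions} gives
\[
\int f\, d(\mu\ast\sigma) \;=\; \int K_f(g)\, d\mu(g), \qquad K_f(g) := \int f(g\cdot x)\, d\sigma(x).
\]
Since $(g,x)\mapsto f(g\cdot x)$ belongs to $C_b(G\times X)$ (joint continuity of the action plus boundedness of $f$) and $\sigma$ is $\tau$-additive, point~(b) of Fact~\ref{Fre.417B} yields $K_f \in C_b(G)$. Hence $\mu \mapsto \int f\, d(\mu\ast\sigma)$ is a basic weakly-continuous functional on $\CM_\tau(G)$, which delivers (b).

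The substantive step is (a), which is where I expect the main work to lie, though it boils down to a standard finite-dimensional center-of-mass argument once set up. Given $\mu \in \CM_\tau(G)$ and a basic weak neighborhood of $\mu$ determined by $f_1,\dots,f_n \in C_b(G)$ and tolerance $\varepsilon > 0$, I would form the continuous bounded map $\Phi := (f_1,\dots,f_n) : G \to \Rr^n$. The push-forward $\Phi_\ast\mu$ is a Borel probability measure on $\Rr^n$ supported in the bounded set $\overline{\Phi(G)}$, and its center of mass equals $c := (\int f_1\, d\mu,\dots,\int f_n\, d\mu)$. A Hahn--Banach (or direct linear-functional) argument places $c$ in the closed convex hull of $\Phi(G)$, hence within $\varepsilon$ of some finite convex combination $\sum_j c_j\, \Phi(g_j)$ with $g_j \in G$ and $c_j \geq 0$, $\sum_j c_j = 1$. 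The measure $\nu := \sum_j c_j\,\delta_{g_j} \in \conv(G)$ then satisfies $|\int f_i\, d\nu - \int f_i\, d\mu| < \varepsilon$ for every $i$, placing $\nu$ in the prescribed neighborhood of $\mu$. No topological pathology of $G$ is encountered: once the test functions are fixed, everything collapses to a finite-dimensional convex problem in $\Rr^n$.
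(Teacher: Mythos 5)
Your proposal is correct, and its analytic ingredients are the same ones the paper uses: the identity $\int f\,d(\mu\ast\sigma)=\int K_f\,d\mu$ with $K_f(g)=\int f(g\cdot x)\,d\sigma(x)$ continuous and bounded is exactly the paper's $H_i$ computation via Fact \ref{Fre.417B} and Lemma \ref{lemma:convolution.functions}. Where you genuinely diverge is in how the finitely supported approximant is produced and how the argument is packaged. The paper argues directly inside a basic neighborhood $O$ of $\eta\in E(\CM_\tau(X),\CM_\tau(G))$: it picks $\mu$ with $(\mu\ast-)\in O$ and then invokes the Weak Law of Large Numbers to find a tuple $\bar g$ whose uniform average satisfies $\int H_i\,d\Av(\bar g)\approx_\epsilon\int H_i\,d\mu$ for the finitely many test functions. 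You instead isolate two reusable statements --- weak density of $\conv(G)$ in $\CM_\tau(G)$, proved by the finite-dimensional barycenter/Hahn--Banach separation argument, and separate weak continuity of $\mu\mapsto\mu\ast\sigma$ --- and then pass to the Ellis closure via pointwise (product-topology) convergence of the operators. Your convex-geometric step is arguably cleaner than the paper's probabilistic one (the WLLN sampling argument implicitly needs i.i.d. copies on $G^{N}$, whereas your separation argument is purely finite-dimensional and needs no extra structure), and it yields the density of $\conv(G)$ as a standalone fact; the paper's route is more direct and produces uniform-weight averages $\Av(\bar g)$ rather than arbitrary convex combinations, though the proposition does not need that extra precision. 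One small point to add: after showing that each map $(\mu\ast-)$ with $\mu\in\CM_\tau(G)$ lies in $E(\CM_\tau(X),\conv(G))$, say explicitly that this set is closed in $\CM_\tau(X)^{\CM_\tau(X)}$, hence contains the closure $E(\CM_\tau(X),\CM_\tau(G))$ of those maps; you prove the inclusion of the generating translations but leave this final (trivial) closure step tacit.
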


\begin{proof} 
The fact that $E(\CM_\tau(X),\conv(G)) \subseteq E(\CM_\tau(X),\CM_\tau(G))$ follows from $\conv(G)\subseteq \CM_\tau(G)$ and definitions.

Now suppose that $\eta \in E(\CM_\tau(X),\CM_\tau(G))$. 
It suffices to show that for any open neighborhood $O\subseteq \CM_\tau(X)^{\CM_\tau(X)}$
of $\eta$, there exists $\bar{g}=(g_1,\ldots,g_n)\in G^n$ such that $(\Av(\bar{g})\ast-)\in O$.
So fix an open neighborhood $O$ of $\eta$, without loss of generality $O$ looks as follows:
\begin{equation*}
    O = \bigcap_{i=1}^{n} \{\beta\in\CM_\tau(X)^{\CM_\tau(X)} \;|\;
    r_i < \int f_i d\beta(\nu_i) < s_i\},
\end{equation*}
for some $f_i\in C_b(X)$ and $r_i,s_i\in\Rr$ for each $i\leqslant n$.

Since $\eta \in E(\CM_\tau(X),\CM_\tau(G))$, there exists some $\mu \in \CM_\tau(G)$ 
such that $(\mu\ast-)\in O$. 
Note that, similarly as in the proof of Remark \ref{rem:continuity.piM}, the maps
$$H_i:G\ni g\mapsto H_i(g)=\int f_i(g\cdot x)d\nu_i(x)\in\Rr$$
are continuous (by Fact \ref{Fre.417B}) and bounded. By Lemma \ref{lemma:convolution.functions}, we compute
$$(r_i,s_i)\ni \int f_id(\mu\ast\nu_i)=\int H_i(g)d\mu(g).$$
Consider $\epsilon>0$ such that for every $i\leqslant n$,
$r_i< \int f_id(\mu\ast\nu_i)-\epsilon$ and $\int f_id(\mu\ast\nu_i)+\epsilon<s_i$.
By 
the Weak Law of Large Numbers
there exists $\bar{g}=(g_1,\ldots,g_n)\in G^n$ such that for every $i\leqslant n$
we have 
$$\int H_i d\mu \approx_{\epsilon} \int H_i d\Av(\bar{g}).$$
Then
$$(r_i,s_i)\ni \int H_id\Av(\bar{g})=\int f_id(\Av(\bar{g})\ast\nu_i),$$
for every $i\leqslant n$ and so $(\Av(\bar{g})\ast-)\in O$.
\end{proof}

\begin{cor}
For every $T$ and every $M\models T$, we have
$$E(\mathfrak{M}_x(M),\conv(\aut(M)))\cong E(\mathfrak{M}_x(M),\CM_\tau(\aut(M))).$$
\end{cor}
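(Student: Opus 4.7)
The plan is to reduce the corollary to a direct application of Proposition \ref{prop: equal Ellis2}, by taking $X = S_x(M)$, the Stone space of complete types in the variable $x$ over $M$, with the natural action of $G = \aut(M)$ by permuting clopen $\emptyset$-definable-over-$M$ sets. First I would record that $S_x(M)$ is compact Hausdorff (hence regular) and that the above action of $\aut(M)$ on $S_x(M)$ is continuous — indeed it acts by homeomorphisms. This places us in the set-up of the proposition.

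Second, I would identify $\mathfrak{M}_x(M)$ with $\CM_\tau(S_x(M))$. This is precisely the content of the diagram displayed just before Fact \ref{Fre.412Wb}: on a compact Hausdorff space the regular Borel probability measures, the $\tau$-additive Borel probability measures, and (after completion, and up to the usual identification) the probability quasi-Radon measures all coincide, and on $S_x(M)$ these are exactly the Keisler measures $\mathfrak{M}_x(M)$.

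Third, the proposition requires $\CM_\tau(X)$ to be compact Hausdorff. Hausdorffness of the weak topology on $\CM_\tau(S_x(M))$ is immediate from the separation of measures by continuous functions. For compactness I would invoke the Riesz representation theorem: $\mathfrak{M}_x(M)$ embeds as the set of norm-one positive linear functionals on $C(S_x(M))$, which is a weak-$*$-closed subset of the closed unit ball of $C(S_x(M))^{*}$ and hence weak-$*$-compact by Banach–Alaoglu; one then checks that this weak-$*$ topology restricts to the weak topology on measures defined in the text.

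With all three hypotheses in hand, Proposition \ref{prop: equal Ellis2} applies verbatim and yields
\begin{equation*}
E(\mathfrak{M}_x(M),\conv(\aut(M))) = E(\mathfrak{M}_x(M),\CM_\tau(\aut(M))),
\end{equation*}
which is even stronger than the claimed isomorphism. The only real step to take care of is the verification that $\mathfrak{M}_x(M) = \CM_\tau(S_x(M))$ as topological spaces; everything else is bookkeeping against the results already established earlier in the paper.
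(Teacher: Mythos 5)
Your proposal is correct and follows exactly the route the paper intends: the corollary is stated as an immediate application of Proposition \ref{prop: equal Ellis2} with $G=\aut(M)$ acting on $X=S_x(M)$, using the identification of Keisler measures with $\tau$-additive Borel probability measures established in Section 2. Your additional verifications (joint continuity of the action, compactness of $\mathfrak{M}_x(M)$ via Riesz representation and Banach--Alaoglu) are the standard bookkeeping the paper leaves implicit, so there is nothing to correct.
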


We can conclude that for the purpose of doing topological dynamics in model theory, there is no need to go outside of the action of the semigroup $\conv(\aut(M))$.

\bibliographystyle{plain}
\bibliography{biblio2}

\end{document}